\documentclass{amsart}
\usepackage[margin=1in]{geometry}

\usepackage{amsmath,amsthm, amssymb,mathrsfs,  xypic}
\usepackage{stackengine,scalerel}
\usepackage[colorlinks]{hyperref}
\usepackage[capitalize]{cleveref}


\newcommand{\Ext}{\mathrm{Ext}}

\newcommand{\GL}{\mathrm{GL}}

\newcommand{\crys}{\mathrm{crys}}

\newcommand{\mr}[1]{\mathrm{#1}}

\theoremstyle{plain}


\newtheorem*{theorem*}{Theorem}
\newtheorem*{conjecture*}{Conjecture}
\newtheorem{theorem}{Theorem}
\newtheorem{corollary}{Corollary}

\newtheorem{lemma}{Lemma}

\theoremstyle{definition}

\newtheorem{remark}{Remark}

\crefformat{section}{\S#2#1#3}
\crefformat{Section}{\S#2#1#3}

\title{The de Jong fundamental group of $\mathbb{P}^1_C$ depends on $C$ and is not always topologically countably generated.}

\author{Sean Howe}

\begin{document}

\begin{abstract}
For $C/\mathbb{Q}_p$ complete and algebraically closed, we show that the de Jong fundamental group $\pi_{1,\mathrm{dJ}}(\mathbb{P}^1_C)$ depends on $C$ and, if $C$ has cardinality  $>2^{\mathbb{N}}$, that it is not topologically countably generated. The result and proofs generalize to any connected rigid analytic variety with a non-constant map to $\mathbb{P}^n_C$. 
\end{abstract}

\maketitle

\newcommand{\rig}{\mr{rig}}
\newcommand{\dJ}{\mr{dJ}}
Let $C/\mathbb{Q}_p$ be a complete algebraically closed extension and consider the projective line $\mathbb{P}^1_C$ as a rigid analytic variety over $C$. For $t \in C$, we have a map $\varphi_t: \mathbb{P}^1_C \rightarrow \mathbb{P}^1_C,\; [x:y] \mapsto [(x-ty)^2:y^2].$ 
On the open subvariety $\mathbb{A}^1_C=\mathbb{P}^1_C\backslash\{[1:0]\}$ with coordinate $z=x/y$, $\varphi_t$ is the map $z\mapsto (z-t)^2$. 
Let $\mathbb{L}$ be the Lubin-Tate rank two $\mathbb{Q}_p$-local system on the rigid analytic variety $\mathbb{P}^{1}_C$ described in \cite[Proposition 7.2]{deJong.EtaleFundamentalGroupsOfNonArchimedeanAnalyticSpaces} and let $\mathbb{L}_t := \varphi_t^* \mathbb{L}$. 

\begin{lemma}\label{lemma.non-iso}
    If $t_1 \neq t_2$, then $\mathbb{L}_{t_1}$ is not isomorphic to $\mathbb{L}_{t_2}$. 
\end{lemma}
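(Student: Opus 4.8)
The plan is to recover the parameter $t$ from the isomorphism class of $\mathbb{L}_t$ by means of the Hodge--Tate period map, a $p$-adic Hodge-theoretic invariant attached functorially to a local system. Recall that $\mathbb{L}$ is de Rham: on the Lubin--Tate disk it is (the rationalization of) the Tate module of the universal deformation of a one-dimensional formal group of height two, and de Rhamness persists under the gluing with the residue disk at $[1:0]$ used to produce $\mathbb{L}$ on $\mathbb{P}^1_C$. Hence $\mathbb{L}\otimes_{\mathbb{Q}_p}\widehat{\mathcal{O}}_{\mathbb{P}^1_C}$ is a rank-two vector bundle carrying a canonical two-step Hodge--Tate filtration with line-bundle graded pieces (of weights $0$ and $1$); write $\mathcal{H}_{\mathbb{L}}\subseteq\mathbb{L}\otimes_{\mathbb{Q}_p}\widehat{\mathcal{O}}_{\mathbb{P}^1_C}$ for its nontrivial step. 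This bundle has trivial determinant (since $\det\mathbb{L}\cong\mathbb{Q}_p(1)$, whose associated line bundle $\widehat{\mathcal{O}}_{\mathbb{P}^1_C}(1)$ is trivial), and the key input from \cite[Proposition 7.2]{deJong.EtaleFundamentalGroupsOfNonArchimedeanAnalyticSpaces} --- equivalently, the content of the Gross--Hopkins period morphism --- is that $\mathbb{L}\otimes_{\mathbb{Q}_p}\widehat{\mathcal{O}}_{\mathbb{P}^1_C}$ is in fact the trivial bundle and $\mathcal{H}_{\mathbb{L}}$ has degree $-1$. Fixing an isomorphism $\mathbb{L}\otimes_{\mathbb{Q}_p}\widehat{\mathcal{O}}_{\mathbb{P}^1_C}\cong\widehat{\mathcal{O}}_{\mathbb{P}^1_C}^{2}$, the sub-line-bundle $\mathcal{H}_{\mathbb{L}}$ is then the same datum as a degree-one morphism $\pi_{\mathbb{L}}\colon\mathbb{P}^1_C\to\mathbb{P}^1_C$ --- that is, an \emph{automorphism} --- well defined up to post-composition with an element of $\Aut(\mathbb{P}^1_C)=\mathrm{PGL}_2(C)$.

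Since the Hodge--Tate filtration is compatible with pullback along morphisms of smooth rigid-analytic varieties, we have $\mathcal{H}_{\mathbb{L}_t}=\varphi_t^{*}\mathcal{H}_{\mathbb{L}}$ inside $\mathbb{L}_t\otimes_{\mathbb{Q}_p}\widehat{\mathcal{O}}_{\mathbb{P}^1_C}=\varphi_t^{*}\bigl(\mathbb{L}\otimes_{\mathbb{Q}_p}\widehat{\mathcal{O}}_{\mathbb{P}^1_C}\bigr)$, and tracking trivializations gives $\pi_{\mathbb{L}_t}=\pi_{\mathbb{L}}\circ\varphi_t$, again only up to $\mathrm{PGL}_2(C)$ on the target. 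To remove this ambiguity I pass to ramification divisors \emph{on the source}: for finite morphisms $f,g\colon\mathbb{P}^1_C\to\mathbb{P}^1_C$ with $g$ an automorphism, $f$ and $g\circ f$ have the same ramification divisor, so the ramification divisor $R(\pi_{\mathbb{L}_t})$, a divisor on $\mathbb{P}^1_C$, depends only on the isomorphism class of $\mathbb{L}_t$. (Here one uses $H^0(\mathbb{P}^1_C,\widehat{\mathcal{O}}_{\mathbb{P}^1_C})=C$, so that any $\widehat{\mathcal{O}}_{\mathbb{P}^1_C}$-linear automorphism of $\widehat{\mathcal{O}}_{\mathbb{P}^1_C}^{2}$ acts on $\mathbb{P}^1_C=\mathbb{P}(\widehat{\mathcal{O}}_{\mathbb{P}^1_C}^{2})$ through $\mathrm{PGL}_2(C)$.) By the chain rule for ramification divisors and the fact that $\pi_{\mathbb{L}}$ is unramified,
\[
R(\pi_{\mathbb{L}_t}) \;=\; R(\pi_{\mathbb{L}}\circ\varphi_t) \;=\; R(\varphi_t)+\varphi_t^{*}R(\pi_{\mathbb{L}}) \;=\; R(\varphi_t).
\]
On $\mathbb{A}^1_C$ one has $\varphi_t(z)=(z-t)^2$, so $d\varphi_t=2(z-t)\,dz$ vanishes to order one at $z=t$, while near $[1:0]$ the map $\varphi_t$ is likewise ramified to order one; hence $R(\varphi_t)=(t)+([1:0])$.

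Assembling the pieces: if $\mathbb{L}_{t_1}\cong\mathbb{L}_{t_2}$, then the induced $\widehat{\mathcal{O}}_{\mathbb{P}^1_C}$-linear isomorphism carries $\mathcal{H}_{\mathbb{L}_{t_1}}$ onto $\mathcal{H}_{\mathbb{L}_{t_2}}$, so $\pi_{\mathbb{L}_{t_1}}$ and $\pi_{\mathbb{L}_{t_2}}$ agree up to $\mathrm{PGL}_2(C)$, and therefore $(t_1)+([1:0])=R(\pi_{\mathbb{L}_{t_1}})=R(\pi_{\mathbb{L}_{t_2}})=(t_2)+([1:0])$ as divisors on $\mathbb{P}^1_C$, forcing $t_1=t_2$. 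The step I expect to demand genuine care is the input that $\pi_{\mathbb{L}}$ is an automorphism --- equivalently, that $\mathbb{L}\otimes_{\mathbb{Q}_p}\widehat{\mathcal{O}}_{\mathbb{P}^1_C}$ is trivial with $\mathcal{H}_{\mathbb{L}}$ of degree $-1$ --- which is exactly the identification of the Hodge--Tate period map of the Lubin--Tate local system with the Gross--Hopkins period morphism; this is the mechanism underlying de Jong's example, but it should be written out. If one prefers not to invoke that $\mathbb{L}$ is de Rham, the same argument runs with the geometric Sen operator $\Theta_{\mathbb{L}}\in H^0\bigl(\mathbb{P}^1_C,\mathrm{End}(\mathbb{L}\otimes_{\mathbb{Q}_p}\widehat{\mathcal{O}}_{\mathbb{P}^1_C})\otimes_{\mathcal{O}_{\mathbb{P}^1_C}}\Omega^1_{\mathbb{P}^1_C}(-1)\bigr)$ in place of $\pi_{\mathbb{L}}$: it is trace-free (as $\det\mathbb{L}$ is constant over $C$) and everywhere nilpotent, since $\det\Theta_{\mathbb{L}}\in H^0\bigl(\mathbb{P}^1_C,(\Omega^1_{\mathbb{P}^1_C})^{\otimes 2}(-2)\bigr)=0$, so its vanishing divisor plays the role of $R(\pi_{\mathbb{L}})$, while $\Theta_{\mathbb{L}_t}=\varphi_t^{*}\Theta_{\mathbb{L}}$ acquires the zeros of $d\varphi_t$.
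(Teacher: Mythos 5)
The decisive step is the one you yourself flagged: the claim that $\mathbb{L}\otimes_{\mathbb{Q}_p}\widehat{\mathcal{O}}$ is the trivial pro-\'etale bundle on $\mathbb{P}^1_C$ with Hodge--Tate sub-line $\mathcal{H}_{\mathbb{L}}$ of degree $-1$, so that its classifying map $\pi_{\mathbb{L}}$ is an automorphism. This is not the content of \cite[Proposition 7.2]{deJong.EtaleFundamentalGroupsOfNonArchimedeanAnalyticSpaces} or of Gross--Hopkins, and it is false. What Gross--Hopkins trivializes is the \emph{de Rham} realization (the constant rational Dieudonn\'e module $M_0\otimes\mathcal{O}$), whose Hodge filtration is the tautological $\mathcal{O}(-1)$; the Hodge--Tate filtration of $\mathbb{L}\otimes\widehat{\mathcal{O}}$ goes the other way. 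Pulling back to the Lubin--Tate space, where $\mathbb{L}$ becomes $V_p$ of the universal formal group $G$, the Hodge--Tate sub is $\mathrm{Lie}(G)(1)\otimes\widehat{\mathcal{O}}$ and the quotient is $\omega_{G^\vee}\otimes\widehat{\mathcal{O}}$; under the Gross--Hopkins identification these descend to twists of $\mathcal{O}(1)$ and $\mathcal{O}(-1)$ respectively, so $\mathcal{H}_{\mathbb{L}}$ has degree $+1$, not $-1$. (The sign is forced even without the descent computation: the everywhere-nonvanishing nilpotent Sen operator identifies the Hodge--Tate quotient $Q$ with $\mathcal{H}_{\mathbb{L}}\otimes\Omega^1_{\mathbb{P}^1}(-1)$, so $\deg Q=\deg\mathcal{H}_{\mathbb{L}}-2$, while $\deg\mathcal{H}_{\mathbb{L}}+\deg Q=0$.) In particular $\mathbb{L}\otimes\widehat{\mathcal{O}}$ is not trivial: a trivialization would exhibit the quotient, a twist of $\mathcal{O}(-1)\otimes_{\mathcal{O}}\widehat{\mathcal{O}}$, as generated by two global sections, whereas $H^0_{\proet}(\mathbb{P}^1_C,\mathcal{O}(-1)\otimes_{\mathcal{O}}\widehat{\mathcal{O}})=H^0(\mathbb{P}^1_C,\mathcal{O}(-1))=0$. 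The genuine Hodge--Tate period map of $\mathbb{L}$ lives on the infinite-level Lubin--Tate space and has image the Drinfeld half-plane; it does not descend to a degree-one self-map of the Gross--Hopkins $\mathbb{P}^1$. So the object $\pi_{\mathbb{L}}$ on which your ramification-divisor bookkeeping rests does not exist. (A version of your idea can be run with the de Rham period data, where the bundle really is trivial with tautological Hodge filtration classified by the identity; but that needs de Rhamness of $\mathbb{L}$, functoriality of relative $D_{\dR}$, and rigidity of the filtered connection, and it is a different argument from the one written.)

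Your closing Sen-operator variant is in fact the paper's proof: the paper compares $\kappa_t=\varphi_t^*\kappa\circ d\varphi_t$ and reads off where it vanishes. But as you sketch it, the key input is missing. Vanishing of the trace and determinant (your degree count) only gives nilpotency; it does not control the vanishing divisor of $\Theta_{\mathbb{L}}$ --- a priori $\Theta_{\mathbb{L}}$ could vanish at points, or even identically, as it does for the trivial rank-two local system, which passes the same trace/determinant test. What makes the argument work is that $\kappa$ is injective (equivalently $\Theta_{\mathbb{L}}$ is nonzero) at \emph{every} geometric point, which the paper imports from the explicit computations in \cite[\S 4.3]{DospinescuRodriguezCamargo.JacquetLanglands} and \cite{Pan.OnLocallyAnalyticVectorsOfTheCompletedCohomologyOfModularCurves}. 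With that input, the zero locus of the Sen morphism of $\mathbb{L}_t$ is exactly the ramification divisor $(t)+([1:0])$ of $\varphi_t$, and your final divisor comparison does give $t_1=t_2$; without it, neither your main route (whose unramifiedness claim for $\pi_{\mathbb{L}}$ was derived from the false triviality statement) nor the fallback closes the argument.
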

\begin{proof}
    Let $\kappa: T_{\mathbb{P}^1_C} \rightarrow \mathcal{E}nd(\mathbb{L} \otimes{\hat{\mathcal{O}}})(-1)$ be the geometric Sen morphism associated to $\mathbb{L} \otimes \hat{\mathcal{O}}$ by \cite[Theorem 1.0.3]{RodriguezCamargo.GeometricSenTheoryOverRigidAnalyticSpaces}. By its  computation in \cite[\S 4.3]{DospinescuRodriguezCamargo.JacquetLanglands} (or the original computation in  \cite{Pan.OnLocallyAnalyticVectorsOfTheCompletedCohomologyOfModularCurves} after restriction to a supersingular disk), $\kappa$ is injective at every geometric point of $\mathbb{P}^1_C$ (cf. \cite[Lemma 2]{Howe.ThedeJongFundamentalOfANonTrivialAbelianVarietyIsNonAbelian}). Now, for $t \in C$, functoriality of the geometric Sen morphism  \cite[Theorem 1.0.3-(5)]{RodriguezCamargo.GeometricSenTheoryOverRigidAnalyticSpaces} implies that the geometric Sen morphism of $\mathbb{L}_t$ is $\varphi_t^* \kappa \circ d\varphi_t$. In particular, writting $\kappa_i$ for the geometric Sen morphism of $\mathbb{L}_{t_i}$, we find $\kappa_{1}$ is zero on the fiber $T_{\mathbb{P}^1_C, [t_1:1]}$ whereas $\kappa_2$ is not (the derivative of $z \mapsto (z-t)^2$ on $\mathbb{A}^1_C$ vanishes exactly at $t$). Thus $\mathbb{L}_1 \not\cong\mathbb{L}_2$. 
\end{proof}

\begin{theorem}\label{theorem.generators}
    Let $\pi_{1,\dJ}(\mathbb{P}^1_C, [0:1])$ be the de Jong fundamental group of \cite{deJong.EtaleFundamentalGroupsOfNonArchimedeanAnalyticSpaces}. If $S \subseteq \pi_{1,\dJ}(\mathbb{P}^1_C, [0:1])$ is infinite and $2^S$ has cardinality less than that of $C$, then $S$ is not a set of topological generators for $\pi_{1,\dJ}(\mathbb{P}^1_C, [0:1])$. In particular, if $C$ has cardinality $>2^{\mathbb{N}}$ then $\pi_{1,\dJ}(\mathbb{P}^1_C, [0:1])$ is not topologically countably generated. 
\end{theorem}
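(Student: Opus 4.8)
The plan is to deduce the statement from Lemma \ref{lemma.non-iso} by a cardinality count. The one input I need from \cite{deJong.EtaleFundamentalGroupsOfNonArchimedeanAnalyticSpaces} is the dictionary between $\mathbb{Q}_p$-local systems (in de Jong's sense) on the connected rigid analytic variety $\mathbb{P}^1_C$ and continuous representations of $G := \pi_{1,\dJ}(\mathbb{P}^1_C,[0:1])$: isomorphism classes of rank-two such local systems correspond to conjugacy classes of continuous homomorphisms $G \to \GL_2(\mathbb{Q}_p)$. This applies to $\mathbb{L}$, which is such a local system by construction, and hence to every $\mathbb{L}_t = \varphi_t^*\mathbb{L}$, since this class of local systems is stable under pullback along morphisms of rigid analytic varieties; connectedness of $\mathbb{P}^1_C$ makes the choice of base point harmless. (Compare the use of this formalism in \cite{Howe.ThedeJongFundamentalOfANonTrivialAbelianVarietyIsNonAbelian}.) Granting this, Lemma \ref{lemma.non-iso} says the representations attached to the $\mathbb{L}_t$ are pairwise non-conjugate, so there are at least $|C|$ conjugacy classes of continuous homomorphisms $G \to \GL_2(\mathbb{Q}_p)$, and therefore at least $|C|$ such homomorphisms.

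Next I would bound the number of continuous homomorphisms $G \to \GL_2(\mathbb{Q}_p)$ from above when $S$ topologically generates $G$. Then the abstract subgroup $\langle S\rangle$ is dense in $G$; since two continuous maps into the Hausdorff group $\GL_2(\mathbb{Q}_p)$ agreeing on a dense subset agree everywhere, and a homomorphism on $\langle S\rangle$ is determined by its values on $S$, restriction embeds the set of continuous homomorphisms $G \to \GL_2(\mathbb{Q}_p)$ into $\Map(S,\GL_2(\mathbb{Q}_p))$. As $\GL_2(\mathbb{Q}_p)$ has cardinality $|\mathbb{Q}_p| = 2^{\aleph_0}$ and $S$ is infinite, $|\Map(S,\GL_2(\mathbb{Q}_p))| = (2^{\aleph_0})^{|S|} = 2^{|S|} = |2^S|$. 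Combined with the first paragraph this forces $|C| \le |2^S|$; so if $|2^S| < |C|$ then $S$ is not a topological generating set, which is the first assertion.

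For the last sentence, suppose $C$ has cardinality $> 2^{\mathbb{N}} = 2^{\aleph_0}$. First, $G$ is infinite: a finite group admits at most $|\GL_2(\mathbb{Q}_p)|^{|G|} = 2^{\aleph_0}$ homomorphisms to $\GL_2(\mathbb{Q}_p)$, hence at most $2^{\aleph_0} < |C|$ conjugacy classes of them, contradicting the first paragraph. If $G$ is infinite and topologically countably generated, it has a countably infinite topological generating set $S$ (enlarge a countable one, using that $G$ is infinite), for which $|2^S| = 2^{\aleph_0} = 2^{\mathbb{N}} < |C|$, contradicting the first assertion. Hence $G$ is not topologically countably generated.

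Everything after the first paragraph is elementary cardinal arithmetic and point-set topology, which I expect to be routine; the one place calling for care — and the only genuine input — is the first paragraph: spelling out, with reference to \cite{deJong.EtaleFundamentalGroupsOfNonArchimedeanAnalyticSpaces}, that de Jong's $\mathbb{Q}_p$-local systems on $\mathbb{P}^1_C$ are exactly the continuous finite-dimensional $\mathbb{Q}_p$-representations of $G$, that this identification is functorial for pullback, and that it therefore lets us regard all the $\mathbb{L}_t$ as continuous representations of the single group $G = \pi_{1,\dJ}(\mathbb{P}^1_C,[0:1])$.
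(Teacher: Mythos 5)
Your proposal is correct and follows essentially the same route as the paper: combine \cref{lemma.non-iso} with de Jong's equivalence between local systems and continuous representations of $\pi_{1,\dJ}$, then bound the number of continuous homomorphisms to $\GL_2(\mathbb{Q}_p)$ by $|\GL_2(\mathbb{Q}_p)^S| = 2^{|S|}$ when $S$ topologically generates. The only difference is that you spell out some details the paper leaves implicit (e.g.\ ruling out finite topological generation via the infinitude of the group), which is fine.
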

\begin{proof}
    By \cref{lemma.non-iso}, $\{\mathbb{L}_t\}_{t\in C}$ is a collection of non-isomorphic rank two $\mathbb{Q}_p$-local systems indexed by $C$. On the other hand, by \cite[Theorem 4.2]{deJong.EtaleFundamentalGroupsOfNonArchimedeanAnalyticSpaces}, the isomorphism classes of rank two local systems on $\mathbb{P}^1_C$ are the same as isomorphism classes of continuous representations of $\pi_{1,\dJ}(\mathbb{P}^1_C, [0:1])$ on 2-dimensional $\mathbb{Q}_p$-vector spaces. If $S$ is a set of topological generators, then the cardinality of this latter set is bounded by the cardinality of the set $\GL_2(\mathbb{Q}_p)^S$. Since $\GL_2(\mathbb{Q}_p)$ has cardinality $2^{\mathbb{N}}$, this is the same cardinality as $(2^{\mathbb{N}})^S=2^{\mathbb{N} \times S}$. Since we assume $S$ is infinite,  this is the same cardinality as $2^S$, and thus we conclude.  
\end{proof}

\begin{corollary}\label{cor.dep-choice}
     The isomorphism class of $\pi_{1,\mathrm{dJ}}(\mathbb{P}^1_C,[0:1])$ depends on the choice of $C$.
\end{corollary}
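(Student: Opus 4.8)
The plan is to extract from \cref{lemma.non-iso} a cardinal invariant of the topological group $\pi_{1,\dJ}(\mathbb{P}^1_C,[0:1])$ that grows without bound as $|C|$ does, and then to appeal to the elementary fact that $\mathbb{Q}_p$ admits complete algebraically closed extensions of arbitrarily large cardinality. For a topological group $G$, write $r_2(G)$ for the cardinality of the set of isomorphism classes of continuous representations of $G$ on $2$-dimensional $\mathbb{Q}_p$-vector spaces; since de Jong's fundamental group is a genuine topological group, these representations form a set, so $r_2$ is well defined, and it visibly depends only on the isomorphism class of $G$ as a topological group.

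First I would record that, combining \cref{lemma.non-iso} with \cite[Theorem 4.2]{deJong.EtaleFundamentalGroupsOfNonArchimedeanAnalyticSpaces} exactly as in the proof of \cref{theorem.generators}, the family $\{\mathbb{L}_t\}_{t\in C}$ furnishes $|C|$ pairwise non-isomorphic continuous $2$-dimensional $\mathbb{Q}_p$-representations of $\pi_{1,\dJ}(\mathbb{P}^1_C,[0:1])$; hence
\[
  r_2\bigl(\pi_{1,\dJ}(\mathbb{P}^1_C,[0:1])\bigr)\ \ge\ |C|.
\]
Next, fix one complete algebraically closed extension, say $C_1=\mathbb{C}_p$, and put $\mu:=r_2\bigl(\pi_{1,\dJ}(\mathbb{P}^1_{C_1},[0:1])\bigr)$, a fixed infinite cardinal. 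Choosing $\lambda=\max(\mu,\aleph_0)$ and a set $I$ with $|I|=2^\lambda$, the completed algebraic closure $C_2$ of $\mathbb{Q}_p\bigl(\{t_i\}_{i\in I}\bigr)$ (for the Gauss valuation extending the $p$-adic one) is a complete algebraically closed extension of $\mathbb{Q}_p$ of cardinality $2^\lambda$, since forming $\mathbb{Q}_p(\{t_i\})$, completing, and taking the algebraic closure all preserve an infinite cardinality of the form $2^\lambda$ (using $(2^\lambda)^{\aleph_0}=2^\lambda$); and $2^\lambda\ge 2^\mu>\mu$ by Cantor. Therefore
\[
  r_2\bigl(\pi_{1,\dJ}(\mathbb{P}^1_{C_2},[0:1])\bigr)\ \ge\ |C_2|\ >\ \mu\ =\ r_2\bigl(\pi_{1,\dJ}(\mathbb{P}^1_{C_1},[0:1])\bigr),
\]
so $\pi_{1,\dJ}(\mathbb{P}^1_{C_1},[0:1])$ and $\pi_{1,\dJ}(\mathbb{P}^1_{C_2},[0:1])$ are not isomorphic as topological groups, which is the assertion.

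I do not anticipate a genuine obstacle: the whole argument rests on the two bookkeeping observations above --- that $r_2(G)$ is set-valued (immediate once $\pi_{1,\dJ}$ is known to be a topological group) and that arbitrarily large complete algebraically closed $C/\mathbb{Q}_p$ exist (standard valuation theory). One could try to argue instead through the least cardinality $d(G)$ of a topological generating set, which is likewise an isomorphism invariant and which \cref{theorem.generators} constrains by $2^{d(\pi_{1,\dJ}(\mathbb{P}^1_C,[0:1]))}\ge|C|$; but to make that route conclude one would need a matching upper bound --- e.g.\ that de Jong's category of coverings of $\mathbb{P}^1_{\mathbb{C}_p}$ is essentially small of controlled size, bounding $|\pi_{1,\dJ}(\mathbb{P}^1_{\mathbb{C}_p})|$ --- which is true but less transparent than simply noting that $r_2$ is already a set-sized quantity, so I would present the $r_2$ version above.
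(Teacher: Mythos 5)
Your argument is correct, but it takes a mildly different route from the paper's. The paper deduces the corollary directly from \cref{theorem.generators}: given $C$, choose $C'$ with $|C'|$ larger than the power set of $G:=\pi_{1,\dJ}(\mathbb{P}^1_{C},[0:1])$; if the two groups were isomorphic, then $S=G$ would be an infinite topological generating set of $\pi_{1,\dJ}(\mathbb{P}^1_{C'},[0:1])$ with $|2^S|<|C'|$, contradicting the theorem. You instead bypass \cref{theorem.generators} and work with the cardinal invariant $r_2(G)$, the number of isomorphism classes of continuous $2$-dimensional $\mathbb{Q}_p$-representations: \cref{lemma.non-iso} together with de Jong's Theorem 4.2 gives $r_2\ge |C|$, and then you choose a second field of cardinality exceeding $r_2$ of the first group, which you correctly note exists by completing an algebraic closure of a purely transcendental extension with the Gauss valuation (the paper leaves the existence of large $C'$ implicit). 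The two arguments are counting the same objects; the paper's version factors the count through the generation statement (bounding representations by $\GL_2(\mathbb{Q}_p)^S$), which is the content it wants to highlight, while yours is slightly more self-contained and makes the invariance under topological isomorphism completely transparent. One small correction to your closing aside: the alternative route through generating sets does not require any essential smallness of de Jong's category of coverings --- since $\pi_{1,\dJ}$ is a topological group, it is in particular a set, and taking $S$ to be the whole group (as the paper does) already supplies the needed ``upper bound''; and note also that your proof as written only compares $\mathbb{C}_p$ with one large $C_2$, whereas the paper's formulation gives, for every $C$, a $C'$ with non-isomorphic group --- your argument yields this too once you replace $\mathbb{C}_p$ by an arbitrary $C_1$, which is worth stating since ``depends on $C$'' is most convincingly read that way.
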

\begin{proof}
    For a fixed $C$, take $C'$ of cardinality larger than the power set of $\pi_{1,\mathrm{dJ}}(\mathbb{P}^1_{C},[0:1])$. Then, \cref{theorem.generators} implies $\pi_{1,\mathrm{dJ}}(\mathbb{P}^1_{C'},[0:1]) \not\cong \pi_{1,\mathrm{dJ}}(\mathbb{P}^1_{C},[0:1])$ (since $S=\pi_{1,\mathrm{dJ}}(\mathbb{P}^1_{C},[0:1])$ generates $\pi_{1,\mathrm{dJ}}(\mathbb{P}^1_{C},[0:1])$).
\end{proof}

The role of \cref{lemma.non-iso} in the proof of \cref{theorem.generators} can also be replaced by the following result.
\begin{lemma}\label{lemma.rank-three}
    If $C$ has cardinality greater that $2^\mathbb{N}$, then the set of isomorphism classes of rank three local systems arising as extensions of the constant rank one local system $\mathbb{Q}_p$ by $\mathbb{L}$ has the same cardinality as $C$.
\end{lemma}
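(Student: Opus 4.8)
\emph{Reduction to a dimension count.} The plan is to translate the statement into one about cohomology and then reduce it to computing a single dimension. Rank-three de Jong local systems sitting in an exact sequence $0 \to \mathbb{L} \to \mathcal{F} \to \mathbb{Q}_p \to 0$ are classified, as extensions, by the $\mathbb{Q}_p$-vector space $V := \Ext^1_{\dJ}(\mathbb{Q}_p, \mathbb{L}) = H^1_{\dJ}(\mathbb{P}^1_C, \mathbb{L})$, which by \cite[Theorem 4.2]{deJong.EtaleFundamentalGroupsOfNonArchimedeanAnalyticSpaces} is also $H^1(\pi_{1,\dJ}(\mathbb{P}^1_C,[0:1]), \rho_\mathbb{L})$. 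The forgetful map sends $V$ to the set of isomorphism classes of middle terms, and its fibre over $[\mathcal{F}]$ is a quotient of the set of ways to present $\mathcal{F}$ as such an extension; a presentation is a pair consisting of a surjection $\mathcal{F} \twoheadrightarrow \mathbb{Q}_p$ and an identification of its kernel with $\mathbb{L}$, so this set injects into $\Hom_{\dJ}(\mathcal{F}, \mathbb{Q}_p) \times \Aut_{\dJ}(\mathbb{L})$, of cardinality $\le 2^\mathbb{N}$ since $\Hom_{\dJ}(\mathcal{F},\mathbb{Q}_p)$ is a finite-dimensional $\mathbb{Q}_p$-vector space and $\Aut_{\dJ}(\mathbb{L}) \subseteq \GL_2(\mathbb{Q}_p)$. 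Writing $N$ for the number of isomorphism classes of middle terms, one gets $N \le |V| \le N \cdot 2^\mathbb{N}$; since a $\mathbb{Q}_p$-vector space of dimension $\lambda \ge 1$ has cardinality $\max(\lambda, 2^\mathbb{N})$ and $|C| > 2^\mathbb{N}$, the lemma is equivalent to $\dim_{\mathbb{Q}_p} V = |C|$.

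\emph{Upper bound.} For $\dim_{\mathbb{Q}_p} V \le |C|$ I would compute $H^1_{\dJ}(\mathbb{P}^1_C,\mathbb{L})$ via the Čech formalism on de Jong's site and bound the cardinality of the cocycle groups. The naive bound — cocycles for a de Jong cover $Y \to \mathbb{P}^1_C$ are $\mathbb{L}$-valued functions on $Y \times_{\mathbb{P}^1_C} Y$ — is too weak, since $Y$ can have $|C|$ components and one only gets $\le 2^{|C|}$; so the point is to use the structure of de Jong's coverings to rigidify, so that a class in $H^1_{\dJ}$ is determined by an amount of local data of size $|C|$ (e.g.\ its restrictions to the rational subdomains of $\mathbb{P}^1_C$, of which there are $|C|$, together with a finiteness property of these restrictions). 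Establishing this is itself a real point, but of a routine flavour once de Jong's definitions are pinned down.

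\emph{Lower bound.} The hard direction is $\dim_{\mathbb{Q}_p} V \ge |C|$, i.e.\ the construction of $|C|$ many $\mathbb{Q}_p$-linearly independent classes in $V$. My plan is to localize at discs: for a closed disc $\delta \subseteq \mathbb{A}^1_C$ contained in the interior of a larger closed disc $\Delta$, the local-cohomology sequence exhibits $\ker\!\big(H^1_{\dJ}(\Delta,\mathbb{L}) \to H^1_{\dJ}(\Delta\setminus\delta,\mathbb{L})\big)$, and I would glue a nonzero element of this kernel to the split local system on $\mathbb{P}^1_C \setminus \delta$ along the annulus $\Delta \setminus \delta$, obtaining a nonzero class $c_\delta \in V$ that restricts to zero on $\mathbb{P}^1_C \setminus \delta$. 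Given a family $\{\delta_i\}_{i\in I}$ of discs with $|I| = |C|$ in sufficiently general position, I would then prove $\{c_{\delta_i}\}$ linearly independent by a restriction/support argument; when the $\delta_i$ are forced to be nested rather than disjoint, I expect to separate the classes by tracking on which sub-annuli each is nontrivial, using — exactly as in the proof of \cref{lemma.non-iso} — that the geometric Sen morphism of $\mathbb{L}$ is injective at every geometric point.

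\emph{The main obstacle.} The crux is the lower bound, and within it two points carry the weight. First, one must know that $H^1_{\dJ}$ of a disc with coefficients in $\mathbb{L}$ has a nonzero class dying on an outer annulus; this forces one to use de Jong's genuinely exotic (non-finite-étale, non-admissible-covering) coverings — indeed the finite-level Lubin--Tate/Gross--Hopkins structure contributes only a finite-dimensional (I expect: zero) piece to $H^1_{\dJ}(\mathbb{P}^1_C,\mathbb{L})$, by a $p$-adic Lie group cohomology computation — so this is where most of the work lies. Second, because pairwise disjoint discs in $\mathbb{P}^1_C$ can be scarce (for $C = \mathbb{C}_p$ there are only countably many), the family $\{\delta_i\}$ cannot be taken disjoint, and one must show that classes attached to nested and overlapping discs nonetheless span a space of dimension $|C|$ rather than collapsing; it is this cardinality-sensitive separation, combined with the local non-vanishing, that I expect to be the heart of the argument.
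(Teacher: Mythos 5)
Your reduction step is exactly the paper's: passing from isomorphism classes of middle terms to $\Ext^1(\mathbb{Q}_p,\mathbb{L})=H^1(\mathbb{P}^1_C,\mathbb{L})$, with the observation that each fixed rank-three local system carries at most $2^{\mathbb{N}}$ extension structures (your bound via $\Hom(\mathcal{F},\mathbb{Q}_p)\times\Aut(\mathbb{L})$ is fine, since $H^0$ of a local system embeds into a fiber). But from that point on you have not given a proof: both your upper bound ("of a routine flavour once de Jong's definitions are pinned down") and, much more seriously, your lower bound are programs rather than arguments, and you say so yourself. The entire content of the lemma is the assertion that $H^1(\mathbb{P}^1_C,\mathbb{L})$ has cardinality $|C|$, and your proposal leaves precisely that unestablished.

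The paper does not construct classes by hand at all: it invokes the main result of Hansen's counterexample note, which computes $H^1(\mathbb{P}^1_C,\mathbb{L})\cong C^2/(B^+_\crys)^{\varphi^2=p}$; since this is a quotient of $C^2$, and can even be presented as a quotient of $C$ by a $2$-dimensional $\mathbb{Q}_p$-subspace, its cardinality is $|C|$ and the lemma follows immediately. Beyond being incomplete, your proposed route to the lower bound looks misdirected: Hansen's computation shows the large $H^1$ arises from a global $p$-adic Hodge-theoretic mechanism (the cokernel of $(B^+_\crys)^{\varphi^2=p}\rightarrow C^2$ coming from the Hodge--Tate/de Rham structure of the Lubin--Tate local system on the whole of $\mathbb{P}^1_C$), not from classes supported on small discs; you would first have to prove that $H^1_{\dJ}$ of a closed disc with $\mathbb{L}$-coefficients contains suitable nonzero classes killed on an outer annulus, and then resolve the nested-versus-disjoint disc separation problem you flag --- neither of which you address, and the second of which (as you note, over $\mathbb{C}_p$ there are only countably many pairwise disjoint discs) is a genuine obstruction to getting $|C|$ independent classes by a support argument. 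So the proof as proposed has a real gap at its core; the fix is either to cite (or reprove) the explicit pro-étale cohomology computation $H^1(\mathbb{P}^1_C,\mathbb{L})= C^2/(B^+_\crys)^{\varphi^2=p}$, or to supply a complete construction of $|C|$ non-isomorphic extensions by some other global method.
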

\begin{proof}
    By our assumption on the cardinality of $C$, it suffices to show $\Ext^1(\mathbb{Q}_p, \mathbb{L})$ has the same cardinality as $C$ (the extension structures realized in a fixed rank three local system have cardinality at most $2^{\mathbb{N}}$). But, by the main result of \cite{Hansen.ACounterexample},  $\Ext^1(\mathbb{Q}_p, \mathbb{L})=H^1(\mathbb{P}^1_C, \mathbb{L})= C^2/(B^+_\crys)^{\varphi^2=p}$, and $C^2/(B^+_\crys)^{\varphi^2=p}$ has the same cardinality as $C$ (it can also be presented as a quotient of $C$ by a $2$-dimensional $\mathbb{Q}_p$-subspace). 
\end{proof}

\begin{remark}\label{remark.gen} The constructions of \cref{lemma.non-iso} and \cref{lemma.rank-three} generalize to any rigid analytic variety admitting a non-constant map to $\mathbb{P}^1_C$ and, using the higher rank Lubin-Tate local systems, to any rigid analytic variety admitting a non-constant map to $\mathbb{P}^n_C$. \cref{theorem.generators} and \cref{cor.dep-choice} thus also hold in this generality --- in particular, they apply to any quasi-projective rigid analytic variety over $C$. See \cite[\S4]{Howe.ThedeJongFundamentalOfANonTrivialAbelianVarietyIsNonAbelian} for details.\end{remark}

\noindent\emph{Acknowledgements.} We thank David Hansen, Sasha Petrov, and Bogdan Zavyalov for helpful conversations. This work was supported in part by National Science Foundation grants DMS-2201112 and DMS-2501816.

\bibliographystyle{plain}
\bibliography{references, preprints}

\end{document}